\documentclass[a4paper,11pt]{article}
\usepackage{amsmath}
\usepackage[utf8]{inputenc}
\usepackage[T1]{fontenc}

\usepackage{amsthm}
\usepackage{tikz}
\usepackage{mathrsfs,amssymb,amsfonts} 
\usepackage{enumitem}
\usepackage{fullpage}
\usepackage{hyperref, enumerate}
\usepackage[babel]{microtype}
\usepackage[english]{babel}
\usepackage[capitalise]{cleveref}

\usepackage{thmtools}
\usepackage{mathtools, comment}
\usepackage{amssymb}
\usepackage[nomath]{lmodern}
\usepackage{graphicx}
\usepackage{pgf,tikz,tkz-graph,subcaption}
\usetikzlibrary{arrows,shapes}
\usetikzlibrary{decorations.pathreplacing}
\usepackage{tkz-berge}
\usepackage{enumitem}
\usepackage[normalem]{ulem}
\usepackage{hyperref}
\hypersetup{colorlinks = true, linkcolor = blue, citecolor = blue, urlcolor = blue}

\newcommand*{\abs}[1]{\lvert #1\rvert}

\allowdisplaybreaks

\usepackage[margin=1in]{geometry}
\newtheorem{innercustomthm}{Theorem}
\newenvironment{customthm}[1]
 {\renewcommand\theinnercustomthm{#1}\innercustomthm}
 {\endinnercustomthm}

\newtheorem{defi}{Definition}

\newtheorem{thm}[defi]{Theorem}

\newtheorem{claim}[defi]{Claim}
\newcommand*{\myproofname}{Proof}
\newenvironment{claimproof}[1][\myproofname]{\begin{proof}[#1]}{\end{proof}}

\DeclareMathOperator{\irr}{irr}
\DeclareMathOperator{\Var}{Var}

\author{Stijn Cambie
 \thanks{Department of Computer Science, KU Leuven Campus Kulak-Kortrijk, 8500 Kortrijk, Belgium. Supported by a postdoctoral fellowship by the Research Foundation Flanders (FWO) with grant number 1225224N. Email: \protect\href{mailto:stijn.cambie@hotmail.com}{\protect\nolinkurl{stijn.cambie@hotmail.com}}} \and Jionghua Chang  \thanks{Center for Combinatorics and LPMC, Nankai University, Tianjin 300071, P.R. China. Email:\protect\href{jchang1216914495@gmail.com}{\protect\nolinkurl{jchang1216914495@gmail.com}}}}

\title{Maximum ratio of (graph) irregularities}

\begin{document}
\parindent=0cm
\maketitle

\begin{abstract}
    We estimate the maximum ratio between the $\sigma_t$- and $\sigma$-irregularity for graphs and trees of order $n$, which are respectively bounded by $\Theta(n^{5/2})$ and $n-2$. This answers a question and a conjecture by Filipovski et al. in an elegant way.
    For trees, we obtain that the (Albertson) irregularity measure $\irr$ is an upper bound for the graph variance (normalised with the order).
\end{abstract}

\section{Introduction}
Over the years, one has defined multiple measures of irregularity for graphs. See e.g.~\cite{ADG19} for a discussion of these.
Among the most frequently used ones, there is the irregularity $\irr$ (or $\irr_A$) defined by Albertson~\cite{Albertson97} and the degree variance $\Var$, as defined by Snijders~\cite{snijders81} and Bell~\cite{Bell92}.
For a graph $G=(V, E)$ with order $n$ and size $m$, $\irr(G)=\sum_{uv\in E(G)}\abs{d(u)-d(v)},$ where $d(u)$ is the degree of a vertex $u \in V(G),$ and
$\Var(G)= \frac{\sum_{u\in V(G)}(d(u)-\overline{d})^{2}}{n},$ where $\overline d=\frac{2m}{n}$ is the average degree of the graph $G$.
The latter is related to the indices $IRV(G)$ (defined by R\'{e}ti~\cite{Reti19}) or $\sigma_{t}(G)=\sum_{\{u,v\}\subseteq V(G)}(d(u)-d(v))^2$ (defined in~\cite{DS23}), which both equal $n^2\Var(G).$

As a first result, we note a relation between two of these most important irregularity measures, $\irr$ and $\Var$, for trees.

\begin{thm}\label{thm:1}
    For every tree $T$ of order $n \ge 3$,
    $$\irr(T) > n \Var(T).$$
\end{thm}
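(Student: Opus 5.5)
The plan is to rewrite both sides in terms of edge contributions, so that the inequality becomes a simple combinatorial bound on the tree. Since $T$ is a tree, $m=n-1$ and $\overline d = 2(n-1)/n$. Expanding the variance gives
$$n\Var(T)=\sum_{u\in V(T)} d(u)^2 - n\overline d^{\,2} = \sum_{u\in V(T)} d(u)^2 - \frac{4(n-1)^2}{n}.$$
Every vertex $u$ contributes $d(u)$ to the sum $\sum_{uv\in E(T)}(d(u)+d(v))$, once for each incident edge. Hence $\sum_u d(u)^2=\sum_{uv\in E(T)}(d(u)+d(v))$. For each edge we also have $d(u)+d(v)-\abs{d(u)-d(v)} = 2\min(d(u),d(v))$. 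Combining these,
$$\sum_{u} d(u)^2 - \irr(T) = 2\sum_{uv\in E(T)} \min(d(u),d(v)).$$
Therefore the statement $\irr(T)>n\Var(T)$ is equivalent to
$$\sum_{uv\in E(T)}\min(d(u),d(v)) < \frac{2(n-1)^2}{n} = 2n-2-\frac{2(n-1)}{n}.$$
Because $2(n-1)/n<2$, it suffices to prove the integer bound $\sum_{uv\in E(T)}\min(d(u),d(v))\le 2n-4$ for every tree of order $n\ge 3$.

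For this bound, root $T$ at a leaf $r$, and let $w$ be the unique neighbour of $r$. Since $n\ge 3$, $w$ is not a leaf, so $d(w)\ge 2$. Every edge other than $rw$ joins a vertex $v\notin\{r,w\}$ to its parent, and these vertices $v$ are in bijection with those edges. For such an edge we use $\min\le d(v)$ (the child's degree). For the edge $rw$ we use $\min(d(r),d(w))=1$. Summing and using $\sum_v d(v)=2(n-1)$ and $d(r)=1$ gives
$$\sum_{uv\in E(T)}\min(d(u),d(v)) \le 1+\sum_{v\ne r,w} d(v) = 1 + 2(n-1) - 1 - d(w) \le 2n-4.$$
This yields the strict inequality claimed in \cref{thm:1}.

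The main point requiring care is the reduction step: the identity $\sum_u d(u)^2=\sum_{uv\in E(T)}(d(u)+d(v))$ and the gap between $2n-4$ and $2(n-1)^2/n$ must be checked correctly so that strictness survives. The combinatorial bound itself is just a parent–child charging argument, with the single saving coming from the root edge at a leaf. That saving is exactly where the hypothesis $n\ge 3$ is used: for $n=2$ we have $d(w)=1$, and indeed equality $\irr=n\Var=0$ holds there.
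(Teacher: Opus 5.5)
Your proof is correct, and it takes a genuinely different route from the paper's. The paper deduces \cref{thm:1} from the stronger \cref{thm:conj25_elegant}, $\irr(T)\ge\sum_v(d(v)-2)^2+2(\Delta(T)-2)$. To prove that, it imports a result of Dimitrov et al.\ (stated for $\sigma$ and asserted to adapt to $\irr$) that the greedy tree minimises the irregularity for a fixed degree sequence. It then evaluates $\irr$ on the greedy tree exactly, by induction over rooted subtrees. Strictness comes from $\sum_v(d(v)-2)^2>\sum_v(d(v)-\overline d)^2$, since the mean minimises squared deviations. You instead use the identity $\sum_u d(u)^2-\irr(T)=2\sum_{uv\in E(T)}\min(d(u),d(v))$ and a parent--child charging bound. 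This is elementary and fully self-contained, with no greedy-tree reduction and no external citation. Translated back, your intermediate bound $2n-2-d(w)$ reads $\irr(T)\ge\sum_v(d(v)-2)^2+2(d(w)-2)$, with $w$ the neighbour of a leaf; in effect you are rooting at $w$. This is weaker than the paper's $2(\Delta-2)$ term, which the paper later needs for \cref{thm:2}. Your method recovers that term with a one-line change: root $T$ at a vertex of maximum degree. Charging each edge to its child endpoint then gives $\sum_{uv\in E(T)}\min(d(u),d(v))\le 2(n-1)-\Delta(T)$, which is exactly equivalent to \cref{thm:conj25_elegant}. Moreover, equality holds precisely when every vertex has degree at most that of its parent, as in the greedy tree. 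So your argument also proves directly the greedy-tree minimality of $\irr$ that the paper takes from the literature.
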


Another irregularity measure $\sigma$ (\cite{FGKZP15,GYCC18}), is defined as
$\sigma(G)=\sum_{uv\in E(G)}(d(u)-d(v))^2.$ 
It is interesting to remark that the graphs maximising the irregularity measures $\sigma$ and $\sigma_t$ have only two different degrees, see e.g.~\cite{ADG18, FDKS24}.

Being the sum over a subset of the summands of $\sigma_t,$ it is clear that $\sigma(G)\le \sigma_t(G)$ for every graph $G$. Equality is easily characterised (\cite[Prop.~23]{FDKS24}), with $S_n$ (being the sole tree for which all non-leaves are adjacent to all leaves) the only tree attaining equality.
In the other direction, as a non-trivial corollary of a stronger version of~\cref{thm:1}, we deduce the following sharp inequality between $\sigma_t$ and $\sigma$ for trees.

\begin{thm}\label{thm:2}
    For a tree $T$ of order $n$, $\sigma_t(T)\le (n-2) \sigma(T).$
    Equality is only attained by $T=P_n.$
\end{thm}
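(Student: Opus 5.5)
Since $\sigma_t(T)=n^2\Var(T)$, I will deduce the statement from a sharpened form of \cref{thm:1}, namely
$$\sigma_t(T)\le (n-2)\,\irr(T)\qquad\text{for every tree } T \text{ of order } n,$$
with equality only for $T=P_n$. Given this, the theorem follows at once. All degrees are integers, so $(d(u)-d(v))^2\ge \abs{d(u)-d(v)}$ for every edge. Hence $\irr(T)\le\sigma(T)$, with equality iff adjacent degrees always differ by at most $1$. This gives $\sigma_t(T)\le (n-2)\irr(T)\le (n-2)\sigma(T)$.

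For the equality case I will check $P_n$ directly. Its degrees are two $1$'s and $n-2$ $2$'s, so $\sum d^2=4n-6$. Using the identity $\sigma_t=n\sum_u d(u)^2-(2m)^2$ with $m=n-1$, I get $\sigma_t(P_n)=n(4n-6)-4(n-1)^2=2n-4$. Also $\sigma(P_n)=\irr(P_n)=2$, so equality holds. Conversely, equality in the chain forces equality in the sharpened inequality, hence $T=P_n$. This is consistent with $\irr=\sigma$ for $P_n$. Small cases $n\le 3$ will be checked by hand ($\sigma_t=\sigma=0$ for $n\le 2$, and $P_3=S_3$).

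To prove the sharpened inequality $n\sum_u d(u)^2-4(n-1)^2\le (n-2)\irr(T)$, I plan induction on $n$ by deleting a suitable leaf $v$ with neighbour $u$ of degree $k$.

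\textbf{Change in $\sigma_t$.} Passing from $T'=T-v$ to $T$, the quantity $\sum d^2$ increases by exactly $2k$. The left-hand side therefore increases by
$$\Delta_\sigma=\sum_{T'} d^2+2kn-4(2n-3).$$

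\textbf{Change in $(n-2)\irr$.} The right-hand side changes by
$$\irr(T')+(n-2)\Bigl[(k-1)+\sum_{w\sim u,\,w\ne v}\bigl(\abs{k-d(w)}-\abs{k-1-d(w)}\bigr)\Bigr]+\irr(T') ,$$
where the last $\irr(T')$ comes from the factor changing from $n-3$ to $n-2$.

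To control the sign-dependent bracket, I will choose $v$ so that $u$ has at most one non-leaf neighbour, for instance the end of a longest path. Then every other neighbour $w$ of $u$ is a leaf and contributes exactly $+1$. The remaining term, from the unique non-leaf neighbour, is at least $-1$. The bracket is then at least $(k-1)+(k-2)-1$, roughly.

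\textbf{Main obstacle.} The hardest step is bounding $\Delta_\sigma$, which involves the global quantity $\sum_{T'}d^2$, by the increment of the right-hand side. I would handle it by feeding the induction hypothesis back in, i.e.\ using $\sum_{T'}d^2\le \bigl((n-3)\irr(T')+4(n-2)^2\bigr)/(n-1)$. This turns the needed estimate into a comparison that is linear in $\irr(T')$ and $k$. The case $k=2$ (extending a pendant path) is the tight one, and must be handled so that strictness persists unless $T'=P_{n-1}$ and $T=P_n$.

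If the induction becomes too lossy, my fallback is a direct argument. I would write $\sigma_t=\sum_{\{x,y\}}(d(x)-d(y))^2$, bound each $\abs{d(x)-d(y)}$ by the sum of $\abs{\Delta d}$ along the $x$–$y$ path, and count how often each edge is used.
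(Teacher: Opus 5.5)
\textbf{There is a genuine gap: the leaf-deletion induction you propose cannot prove the sharpened inequality.} Your reduction to $\sigma_t(T)\le (n-2)\irr(T)$ is fine, and that inequality is in fact true. The problem is the induction itself.

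\textbf{The step fails on a concrete tree.} Take the double star $D$ of order $6$: two adjacent vertices of degree $3$, each with two pendant leaves. Then $\sigma_t(D)=32=4\,\irr(D)$, so $D$ attains equality. All leaves of $D$ are equivalent. Deleting one gives $T'$ with degrees $3,2,1,1,1$, $\sigma_t(T')=16$ and $3\,\irr(T')=18$. So the left-hand side grows by $16$, while the right-hand side grows by only $(n-2)\delta+\irr(T')=4\cdot 2+6=14$.

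\textbf{An arithmetic slip hides this.} Your displayed right-hand increment counts $\irr(T')$ twice, which would give $20$ and mask the failure. Your IH-based estimate $\sum_{T'}d^2\le 82/5$ does not rescue the step either.

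\textbf{This is structural, not a bad choice of leaf.} $D$ attains equality while $T'$ does not. So no argument that deduces the case of $D$ from $\sigma_t(T')\le (n-3)\irr(T')$ plus exact increments can work. In general, $F=(n-2)\irr-\sigma_t$ changes by $\irr(T')-\sum_{T'}(d-2)^2-4(k-2)$ when $k\ge 3$ and the non-leaf neighbour of $u$ has degree at least $k$. This can be negative. The case $k=2$ is the harmless one, not the tight one.

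\textbf{Your equality claim is also wrong.} Every tree whose non-leaf vertices all have degree $\Delta$ attains $\sigma_t=(n-2)\irr$. For example, $\sigma_t(S_n)=(n-1)(n-2)^2=(n-2)\irr(S_n)$.

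\textbf{The fallback is too lossy.} On $P_n$ the path-sum bound gives $2n$, since the leaf--leaf pair contributes $4$. This already exceeds $(n-2)\sigma(P_n)=2n-4$.

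\textbf{What is missing is a global lower bound on $\irr$.}
\begin{itemize}
\item Root $T$ at a vertex of maximum degree and use $\abs{d(u)-d(v)}\ge d(\mathrm{parent})-d(\mathrm{child})$ on every edge. This telescopes to $\irr(T)\ge\sum_v(d(v)-2)^2+2(\Delta-2)$, which is \cref{thm:conj25_elegant}. The telescoping works for every tree, so the greedy-tree reduction is not even needed.
\item Combine this with the exact identity $\sigma_t(T)=n\sum_v(d(v)-2)^2-4$ and the paper's Karamata bound $\sum_v(d(v)-2)^2\le(n-2)(\Delta-2)+2$.
\item This gives $(n-2)\irr(T)-\sigma_t(T)\ge 2\bigl[(n-2)(\Delta-2)+2-\sum_v(d(v)-2)^2\bigr]\ge 0$, which is your sharpened inequality for all trees at once.
\end{itemize}
This route also avoids the paper's split into $\Delta=3$ and $\Delta\ge 4$, and its switch from $\irr$ to $\sigma$. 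The paper needs that split only because it uses the strict bound $\sum(d-\overline d)^2<\sum(d-2)^2$ instead of the exact identity.

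\textbf{For the equality case, use both links of your chain.} Equality in the Karamata step with $\Delta\ge 3$ forces every non-leaf vertex to have degree $\Delta$. Equality $\irr=\sigma$ forces adjacent degrees to differ by at most $1$. A leaf next to a vertex of degree $\Delta\ge 3$ violates this. Hence $\Delta\le 2$ and $T=P_n$.
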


This proves~\cite[Conj.~25]{FDKS24}.
The fact that equality is only attained by $P_n$ may suggest the inequality is easy to prove. Nevertheless, it turned out to be more challenging since the inequality is asymptotically tight for many graphs (e.g. consider the greedy trees where the degree of neighbours differ by exactly $1$).

For general graphs, answering a question posed by~\cite{FDKS24} (preceding~\cite[Conj.~25]{FDKS24}), we determine the order of the ratio $\frac{\sigma_t(G)}{\sigma(G)}$ for every connected (irregular) graph $G$.

\begin{thm}\label{thm:3}
    For every (connected) graph $G$, $\sigma_t(G) \le f(n) \sigma(G)$ for some function $f(n)= \Theta(n^{5/2})$. This is sharp, and $f(n) \le \sqrt{1.5} n^{5/2}.$
\end{thm}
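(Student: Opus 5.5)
We prove the upper bound $\sigma_t(G)\le \sqrt{1.5}\,n^{5/2}\sigma(G)$ for every connected irregular graph $G$ of order $n$, and then give a family showing $n^{5/2}$ is the correct order.

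\textbf{Upper bound.} Write $d_{\max}=\Delta$ and $d_{\min}=\delta$, and let $D=\Delta-\delta$. Every summand of $\sigma_t$ is at most $D^2$, and only pairs with distinct degrees contribute. So a crude bound is $\sigma_t(G)\le \frac{n^2}{4}D^2$; more precisely $\sigma_t = n\sum_u (d(u)-\overline d)^2$.

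For $\sigma$, take a vertex $u$ of degree $\Delta$ and a vertex $w$ of degree $\delta$, and a shortest path $u=x_0,x_1,\dots,x_\ell=w$ between them. Along this path the degrees change by a total of at least $D$. By Cauchy--Schwarz,
\[
\sigma(G)\ \ge\ \sum_{i}(d(x_i)-d(x_{i+1}))^2\ \ge\ \frac{D^2}{\ell}.
\]
The key point is that a shortest path through high-degree vertices is short. Vertices at distance $i$ and $i+3$ on a shortest path have disjoint closed neighbourhoods, so $\sum_i (d(x_i)+1)\le 3n$. Since $\sigma$ is also large whenever degrees along the path vary, we split into two cases.

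\emph{Case 1: all degrees on the path are at least $\Delta/2$.} Then $\ell\lesssim 6n/\Delta$, which gives $\sigma\gtrsim D^2\Delta/n$.

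\emph{Case 2: some vertex on the path has degree below $\Delta/2$.} Use only the initial segment of the path up to the first vertex whose degree drops below $\Delta-D/2$. This segment has length $O(n/\Delta)$, since all its vertices have degree at least $\Delta-D/2\ge \Delta/2$, and it sees a degree change of at least $D/2$.

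In either case $\sigma\gtrsim D^2\Delta/n$. Combined with $\sigma_t\le n^2D^2/4$, this gives $\sigma_t/\sigma\lesssim n^3/\Delta$, which is only good when $\Delta\gtrsim\sqrt n$.

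When $\Delta$ is small we bound $\sigma_t$ better, using $\sigma_t\le n\sum_u(d(u)-\overline d)^2\le n^2D^2/4$ together with $D\le\Delta$. More efficiently, $\sigma_t\le n\cdot\sum_u (d(u)-\delta)^2 \le n\cdot 2m\,D$ roughly, i.e.\ $\sigma_t\lesssim n^2\Delta D$. Meanwhile $\sigma\ge D^2/\ell\ge D^2/n$, and $\sigma\ge D$ trivially since $\sigma$ is a positive integer multiple of squared differences. Balancing $n^3/\Delta$ against $n^3\Delta/D$ and optimising over $\Delta$ (worst case $\Delta\approx\sqrt n$) gives $n^{5/2}$. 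Tracking constants carefully in these two estimates and optimising should yield $\sqrt{1.5}\,n^{5/2}$; this constant bookkeeping, and making the path/neighbourhood counting precise, is the main obstacle.

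\textbf{Sharpness.} Take a clique $K_k$ with $k\approx\sqrt n$ and attach a path of length about $n-k$ to one clique vertex, tuned so that degrees decrease slowly. More simply, use a clique $K_k$ together with $n-k$ vertices of degree about $2$ forming a long path or cycle, joined by a single edge. Then:
\begin{itemize}
\item $\sigma = O(k^2)$, coming from the few edges between distinct degrees;
\item $\sigma_t\approx k\cdot(n-k)\cdot k^2 = \Theta(n\cdot k^3)$.
\end{itemize}
With $k=\sqrt n$ the ratio is $\Theta(n\,k^3/k^2)=\Theta(nk)$, which is too small. To lower $\sigma$, we instead interpolate degrees gradually along a chain of cliques of decreasing sizes $k,k-1,\dots,2$. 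This uses $\Theta(k^2)=\Theta(n)$ vertices and gives $\sigma=O(k\cdot\text{small})$, while $\sigma_t=\Theta(n^2k^2)$. Choosing $k\approx\sqrt n$ gives ratio $\Theta(n^{5/2})$. We will verify this with explicit counts.
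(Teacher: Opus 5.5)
\textbf{Sharpness fails as stated.} The chain of cliques does not give $n^{5/2}$. If you join $K_k,K_{k-1},\dots,K_2$ by single bridge edges, each bridge raises the degree of its endpoint above the rest of its clique. In a clique that endpoint is adjacent to all $j-1$ clique-mates, so each $K_j$ contributes $\Theta(j)$ to $\sigma$. This gives $\sigma=\Theta(k^2)=\Theta(n)$, not $O(k)$. With $\sigma_t=\Theta(n^2k^2)=\Theta(n^3)$ you only get $\sigma_t/\sigma=\Theta(n^2)$, which is the easy quadratic bound; your promised ``explicit counts'' would expose this.

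The missing idea is to make each level a gadget whose attachment vertices have a degree deficit exactly equal to the number of bridges they receive. Then, after bridging, level $i$ is exactly $i$-regular and the only irregular edges are the $\Theta(k)$ bridges, each contributing $1$. The paper builds such gadgets (all degrees $r$ except one vertex of degree $r-2$) from circulant $2$-factors of $K_k$ and gets $\sigma=k-6$. In your clique framework the repair is simple: at level $i$ take $K_{i+1}$ minus an edge $a_ib_i$, and add the bridges $b_ia_{i+1}$. Then every inner level is $i$-regular, $\sigma=2k-2$, $n=\Theta(k^2)$ and $\sigma_t=\Theta(n^2k^2)$, so the ratio is $\Theta(n^2k)=\Theta(n^{5/2})$.

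\textbf{Upper bound: a valid alternative route, but the balancing is wrong as written.} The estimate $\sigma_t\lesssim n^2\Delta D$ is weaker, not stronger, than $\sigma_t\le n^2D^2/4$, since $D\le\Delta$. Pairing it with $\sigma\ge D^2/n$ gives $n^3\Delta/D$, which is of order $n^3\Delta$ when $D$ is small, so balancing against $n^3/\Delta$ yields nothing.

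The correct pairing uses ingredients you already listed:
\begin{itemize}
\item Since degree differences are integers, $\sigma\ge\sum_i\abs{d(x_i)-d(x_{i+1})}\ge D$. Combined with $\sigma_t\le n^2D^2/4$ (Popoviciu's variance bound, not mere pair counting), this gives $\sigma_t/\sigma\le n^2\Delta/4$.
\item Truncate the shortest path at the first vertex of degree below $(\Delta+\delta)/2$. This vertex always exists, since $w$ qualifies, so no case split is needed. The segment has $j<6n/\Delta$ edges and a degree drop $>D/2$. Hence $\sigma>D^2\Delta/(24n)$ and $\sigma_t/\sigma<6n^3/\Delta$.
\end{itemize}
Then $\sigma_t/\sigma<\min(n^2\Delta/4,\,6n^3/\Delta)\le\sqrt{1.5}\,n^{5/2}$, exactly the stated constant.

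By comparison, the paper avoids any balancing over $\Delta$. It picks a path minimising the length $r+1$ of the greedy increasing subsequence of degrees. It shows $r\le\sqrt{6n}$, because these vertices have distinct degrees summing to at least $\binom{r+2}{2}$ while each vertex is adjacent to at most three of them. This gives $\sigma\ge D^2/r$ in one step, which it combines with the cruder $\sigma_t\le\binom n2D^2$. Your route is more elementary (shortest path plus integrality); the paper's gives the bound without a case analysis.
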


Perhaps surprisingly, this function is larger than $n^2$.
Note that a cubic upper bound is trivial, and a quadratic lower bound can be obtained by connecting the ends of a path $P_{n/2}$ with the low-degree vertices of $K_{n/2}^-$ ($K_{n/2}$ minus one edge). The correct order is in between.
The proof relies on a well-chosen subset of edges (part of a path with an extremal property) which we use to estimate $\sigma(G)$ and a construction attaining the bound, composed by connecting multiple regular parts by a path.

\section{Main proofs}



We start with proving the following stronger version of~\cref{thm:1}.
Note for this that $\sum_{u\in V(G)}(d(u)-2)^{2}>\sum_{u\in V(G)}(d(u)-\overline{d})^{2},$ the latter because the average (here $\overline d$) minimises the mean square error (here sum of differences to the degrees $d(v)$ squared), and $\Delta(T) \ge 2.$

\begin{thm}\label{thm:conj25_elegant}
    For every tree $T=(V, E)$ with order at least $3$,
    $$\sum_{e=uv \in E} \abs{d(u)-d(v)} \ge \sum_{v \in V} ( d(v)-2)^2 +2(\Delta(T)-2)  .$$
\end{thm}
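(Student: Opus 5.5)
We argue by induction on the order $n$, deleting one well-chosen leaf at a time. First we rewrite the right-hand side. Let $R(T)=\sum_{v}(d(v)-2)^2+2(\Delta(T)-2)$ and let $L(T)=\irr(T)$ be the left-hand side. The base cases are stars. For $T=S_n$ with $n\ge 3$ we have $L=(n-1)(n-2)$ and
\[R=(n-1)+(n-3)^2+2(n-3)=(n-1)(n-2),\]
so equality holds. This also covers $P_3=S_3$.

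Now let $T$ be a tree that is not a star. Take a longest path in $T$ and let $x$ be one of its end leaves, with neighbour $y$. Write $k=d(y)\ge 2$. By maximality of the path, all neighbours of $y$ except one, say $w$ with $d(w)=a\ge 2$, are leaves. Put $T'=T-x$, which has order at least $3$. By induction $L(T')\ge R(T')$, so it suffices to show that the drops $\delta_L=L(T)-L(T')$ and $\delta_R=R(T)-R(T')$ satisfy $\delta_L\ge\delta_R$.

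We compute the two drops for $k\ge 3$.
\begin{itemize}
\item For $\delta_L$: the edge $xy$ contributes $k-1$. Each of the other $k-2$ leaves at $y$ loses exactly $1$. The edge $yw$ changes from $\abs{k-a}$ to $\abs{k-1-a}$, a change of $\pm1$. Hence $\delta_L\ge (k-1)+(k-2)-1=2k-4$.
\item For $\delta_R$: the leaf $x$ contributes $1$, and $y$ contributes $(k-2)^2-(k-3)^2=2k-5$. The term $2(\Delta-2)$ drops by $2$ exactly when $y$ is the unique vertex of maximum degree, and by $0$ otherwise. So $\delta_R\in\{2k-4,\,2k-2\}$.
\end{itemize}
In the first case we are done. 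In the second case $y$ is the unique maximum, so $a<k$. Then the edge $yw$ changes from $k-a$ to $k-1-a$, a drop of exactly $1$, which gives $\delta_L=2k-2=\delta_R$.

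For $k=2$, after deletion $y$ becomes a leaf. Then $\delta_R=1+0-1=0$, and the $\Delta$ term does not change since $\Delta(T)\ge a\ge 2$ and $T'$ is not a single edge. Meanwhile $\delta_L=1+(\abs{2-a}-\abs{1-a})\ge 1-1=0$. So $\delta_L\ge\delta_R$ again.

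The main obstacle is the $\Delta$ term. The naive bound $\delta_L\ge 2k-4$ falls short by exactly $2$ when $\Delta$ decreases. The key point is that this happens only when $y$ is the unique maximum-degree vertex, which forces the sign of $k-a$ and makes the edge $yw$ lose its full unit. One must also check that a longest path guarantees the vertex $w$ exists, i.e.\ that $T$ is not a star; the star is handled directly as the base case with equality.
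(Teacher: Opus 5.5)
Your proof is correct, but it takes a different route from the paper.

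\textbf{The paper's route.} The paper does not induct on the order. It cites the result of Dimitrov et al.\ that greedy trees minimise $\sigma$ among trees with a given degree sequence, and asserts that the same argument works for $\irr$. This lets it assume $T$ is the greedy tree. Rooting that tree at a vertex $r$ of maximum degree, degrees never increase from parent to child. An induction on the height then gives exactly $\irr(T_x)-\sum_{v\in V(T_x)}(d(v)-2)^2=d(x)-2$ for $x\neq r$, and $2(\Delta-2)$ at $r$.

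\textbf{What each approach buys.} The paper obtains \emph{equality} for every greedy tree, so the bound is attained for every degree sequence. Your leaf-deletion induction yields only the inequality; equality cases would have to be traced through the steps where $\delta_L=\delta_R$. On the other hand, your argument is self-contained. It avoids the external extremal result and its unproved transfer from $\sigma$ to $\irr$. You also handle the delicate point correctly: the $\Delta$-term drops only when $y$ is the unique maximum, which forces $a<k$, so the edge $yw$ loses a full unit.

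\textbf{A remark.} The paper's computation does not actually need greediness. Take an arbitrary tree rooted at a maximum-degree vertex $r$, and bound $\abs{d(x)-d(y)}\ge d(x)-d(y)$ for every parent $x$ and child $y$. Summing gives $d(r)^2+\sum_{x\neq r}d(x)(d(x)-2)=\sum_v d(v)^2-4n+4+2\Delta$. This is exactly $\sum_v(d(v)-2)^2+2(\Delta-2)$, with equality precisely when degrees never increase from parent to child.
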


\begin{proof}
    Let $T$ be a tree which is a counterexample.
    By~\cite[Thm.~2.1]{DGLC23} (the proof works analogously for minimising $\irr(T)$ instead of $\sigma(T)$), we can assume it is the greedy tree with a certain degree sequence (the right hand side is fixed by the degree sequence, while the left hand side is not).
    Greedy trees are constructed from a given degree sequence by assigning the highest degree $\Delta$ to the root, the second-, third-, $(\Delta+1)^{th}$ highest degrees to the root's neighbours, and so on.

    We consider this greedy tree as a rooted tree with the root $r$ having maximum degree, drawn with the root on top and children of every vertex drawn below.

    For every vertex $x$, let $T_x$ be the tree rooted in $x$ with its descendants (the vertex $x$ with everything connected to it below).
    
    We prove by induction on the height of $T_x$ that for every $x \not =r$,
    $$ g(T_x)=\sum_{e=uv \in E(T_x) } \abs{d(u)-d(v)} - \sum_{v \in V(T_x)} ( d(v)-2)^2 = d(x)-2.$$

    The base case is true, since for a leaf $x$ it states $0-1=1-2.$
    
    If it is true for the children of vertex $x$,
    with children $Y$, then
    
\begin{align*}
    g(T_x)&= - (d(x)-2)^2+ \sum_{y \in Y} (d(x)-d(y))  + \sum_{y \in Y} g(T_y)\\&= - (d(x)-2)^2 +  \sum_{y \in Y} (d(x)-2) \\&= d(x)-2. 
\end{align*}

Finally, for the root $r$, the same computation leads to 
$$\hspace{2cm} g(T_r)= -(d(r)-2)^2 + d(r)(d(r)-2)= 2(d(r)-2)=2(\Delta(T)-2). \hspace{2cm} \qedhere$$
\end{proof}

From~\cref{thm:conj25_elegant}, we also deduce~\cref{thm:2} (and thus~\cite[Conj.~25]{FDKS24}). For the reader’s convenience, we restate it (the equality for $\Delta=2$ is trivial to verify).

\begin{customthm} {\bf \ref{thm:2}}
    For a tree $T$ of order $n$ with $\Delta(T)\ge3$, $\sigma_t(T)<(n-2) \sigma(T).$
\end{customthm}

\begin{proof}
    If $\Delta=3,$ let the tree have $x+2$ leaves, $x$ vertices of degree $3$ and $y$ vertices of degree $2.$

    Since a lower bound for $\sigma$ is obtained by the greedy tree, we deduce that 
    $$\sigma(T) \ge \begin{cases}
        4(x+2)-2y &\mbox{ if } y \le x+2\\
        2(x+2) &\mbox{ if } y \ge x+2.\\
    \end{cases}$$
    In particular $\sigma(T) \ge 2(x+2)$, and we conclude since $$(n-2)\sigma(T) \ge 2(2x+y)(x+2) \ge 4x(x+2)+2y(x+1)= \sigma_t(T).$$
    Furthermore, one can check that no equality can occur. 
    If $y=0,$ then $\sigma(T) \ge 4(x+2)>2(x+2)$ and thus the first inequality is strict. If $y>0$ the second inequality is strict (the difference is $2y$).

    If $\Delta \ge 4$, we can conclude from~\cref{thm:conj25_elegant}.
    First we notice the following claim.
    \begin{claim}\label{clm:uppbnd_sum(d-2)^2}
        A tree $T$ with maximum degree $\Delta$ and order $n$ satisfies
        \begin{equation}\label{eq:1}
            \sum_{v \in V} ( d(v)-2)^2 \le (n-2)(\Delta-2)+2
        \end{equation}
    \end{claim}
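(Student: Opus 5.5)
We need to prove $\sum_v (d(v)-2)^2 \le (n-2)(\Delta-2)+2$ for a tree of order $n$ with maximum degree $\Delta$; when $\Delta=1$ (so $n=2$) both sides equal $2$. The plan is to bound each summand linearly in $d(v)$ using $1\le d(v)\le \Delta$, and then use the handshake identity $\sum_v d(v)=2n-2$.

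For every vertex, $(d(v)-2)^2 \le (d(v)-2)(\Delta-2)$ whenever $d(v)\ge 2$, since then $0\le d(v)-2\le \Delta-2$. For a leaf the summand is $1$, while $(d(v)-2)(\Delta-2) = -(\Delta-2)$. So we write
$$(d(v)-2)^2 \le (d(v)-2)(\Delta-2) + [d(v)=1]\,(\Delta-1),$$
where the correction term $1+(\Delta-2)=\Delta-1$ appears only at leaves. This inequality holds at every vertex.

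Summing over all vertices, and using $\sum_v (d(v)-2) = 2(n-1)-2n = -2$, gives
$$\sum_v (d(v)-2)^2 \le -2(\Delta-2) + L(\Delta-1),$$
where $L$ is the number of leaves. The number of leaves satisfies $L = 2+\sum_{v:\,d(v)\ge3}(d(v)-2)$, again by the handshake identity. This is the main obstacle: bounding $L$ alone by something like $n-2$ is too weak, because the factor $\Delta-1$ multiplies it.

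The fix is to avoid the crude linearisation at high-degree vertices and keep the exact quadratic there. Write $a_v=d(v)-2$ for the vertices with $d(v)\ge 3$, so $1\le a_v\le \Delta-2$. The exact sum is
$$\sum_v (d(v)-2)^2 = L+\sum a_v^2 = 2+\sum (a_v+a_v^2).$$
The right-hand side of the claim is
$$(n-2)(\Delta-2)+2,\qquad n = L+n_2+n_{\ge3} = 2+\sum a_v + n_2 + n_{\ge 3},$$
where $n_2$ and $n_{\ge3}$ count the vertices of degree $2$ and of degree at least $3$. Hence $n-2 \ge \sum_v (a_v+1)$, with the sum over vertices of degree at least $3$, and the right-hand side is at least
$$2+(\Delta-2)\sum (a_v+1).$$
It then suffices to check, for each such vertex, that $a_v+a_v^2 \le (\Delta-2)(a_v+1)$, which is equivalent to $a_v\le \Delta-2$ and so holds. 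Equality requires $n_2=0$ and all non-leaves of degree $\Delta$, or $\Delta=2$.
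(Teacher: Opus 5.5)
Your proof is correct, but it reaches the bound by a different, more elementary route than the paper.

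\emph{The paper's route.} It gets the claim in one line from Karamata's inequality. The function $x\mapsto(x-2)^2$ is convex, and the degrees lie in $[1,\Delta]$ with fixed sum $2n-2$. So the sum is at most its value on the extremal configuration with $\frac{n-2}{\Delta-1}$ vertices of degree $\Delta$ and all other vertices leaves, a configuration that is generally non-integral. Made rigorous, this is the chord bound $(x-2)^2\le 1+(\Delta-3)(x-1)$ on $[1,\Delta]$, summed using the handshake identity.

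\emph{Your route.} You use the handshake identity to write $L=2+\sum a_v$ and $n-2=n_2+\sum(a_v+1)$. In effect you charge each vertex of degree $d\ge3$ together with $d-2$ leaves, and reduce to the integer inequality $a_v(a_v+1)\le(\Delta-2)(a_v+1)$. The underlying convexity is the same: your per-group inequality is exactly the chord bound summed over one vertex of degree $d$ and $d-2$ leaves, and the degree-$2$ vertices are where the chord is loose.

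\emph{What your route buys.} Your bookkeeping avoids the non-integral extremal sequence and gives the exact slack
$n_2(\Delta-2)+\sum_{v:\,d(v)\ge3}(\Delta-2-a_v)(a_v+1)$.
This makes the refinement the paper needs immediately afterwards transparent. For $\Delta\ge4$, a single vertex of degree $2$ or $\Delta-1$ already gives slack at least $\Delta-2\ge2$, so the $+2$ can be dropped.

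\emph{Two small remarks.}
\begin{itemize}
\item The linearisation you abandoned was not really an obstacle. Since $L=2+\sum a_v\le 2+(\Delta-2)(n-L)$, you get $L(\Delta-1)\le n(\Delta-2)+2$, which is exactly what that route required.
\item The one-vertex tree is the only case your write-up omits. It is trivial, since both sides equal $4$.
\end{itemize}
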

    \begin{claimproof}
        By Karamata's inequality, the left hand side of~\eqref{eq:1} is upper bounded by \begin{align*} \hspace{2cm}2 \cdot (1-2)^2+\frac{n-2}{\Delta-1} \left( (\Delta-2)^2+(\Delta-2)(1-2)^2\right)= 2+(n-2)(\Delta-2). \hspace{2cm} \qedhere \end{align*}
    \end{claimproof}

    If the tree has no $2$ neighbouring vertices for which the degree differ by more than one, then~\cref{clm:uppbnd_sum(d-2)^2} is also true without the $+2$.
    Karamata's estimate assumes all degrees are in $\{\Delta,1 \}$, while $ \Delta-1$ and $ 2$ are also degrees.
    Now $(\Delta-2)^2 + (1-2)^2 \ge (\Delta-1 -2)^2+(2-2)^2+2$, thus the difference with the estimate of the sum of squares in the left hand side of~\eqref{eq:1} is at least $2$.
    Together with~\cref{thm:conj25_elegant}, this implies that
     $$ \sigma(T) \ge \irr(T) \ge \frac{n}{n-2} \sum_{v \in V} ( d(v)-2)^2 > \frac{n}{n-2} \sum_{v \in V} ( d(v)-\overline d)^2 ,$$
     and thus $(n-2)\sigma(T)> \sigma_t(T).$

     If the tree has an edge $uv$ for which $\abs{d(u)-d(v)} \ge 2$, and we consider $\sum_{e=uv \in E} (d(u)-d(v))^2$ instead of $\sum_{e=uv \in E} \abs{d(u)-d(v)},$ then the analogue of~\cref{thm:conj25_elegant} holds with $2$ added to the right hand side.
     Since $(n-2)(\Delta-2)+2 < (n-2)(\Delta-1)$, we conclude that $\sigma(T) > \frac{n}{n-2} \sum_{v \in V} ( d(v)-\overline d)^2=\frac{\sigma_t(T)}{n-2}$, as before.
\end{proof}

Finally, we prove~\cref{thm:3}, repeated here for convenience of the reader.

\begin{customthm} {\bf \ref{thm:3}}
    For every (connected) graph $G$, $\sigma_t(G) \le f(n) \sigma(G)$ for some function $f(n)= \Theta(n^{5/2})$. This is sharp, and $f(n) \le \sqrt{1.5} n^{5/2}.$
\end{customthm}

\begin{proof}[Proof of upper bound $f(n)= O(n^{5/2})$]
    Let $P$ be a path from a vertex of minimum degree to one of maximum degree, such that the (strictly) increasing subsequence of its (ordered) degree sequence one obtains by greedily adding larger degrees has minimum length $r+1$ among all such paths (and choose one of minimum length).


    \begin{claim}
        $r \le \sqrt{6n}$.
    \end{claim}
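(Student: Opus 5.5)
The plan is to extract from the extremal choice of $P$ a family of record vertices whose neighbourhoods are pairwise disjoint, and then count vertices. Write the greedy increasing subsequence along $P$ as $v_0,v_1,\dots,v_r$: here $v_0$ is the first vertex of $P$ (of minimum degree), $v_r$ has maximum degree, and $d(v_0)<d(v_1)<\dots<d(v_r)$. Every vertex of $P$ strictly between $v_i$ and $v_{i+1}$ has degree at most $d(v_i)$. Since $G$ is connected we have $d(v_0)\ge 1$, and hence $d(v_i)\ge i+1$ for all $i$.

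\textbf{Step 1: a shortcutting lemma.} I would show that if $j\ge i+2$, then $v_i$ and $v_j$ are neither adjacent nor have a common neighbour. Suppose $w$ is a common neighbour. Replace the segment of $P$ between $v_i$ and $v_j$ by $v_i w v_j$. If this creates a repeated vertex, then $w$ already lies on $P$, and we shortcut to a subpath instead, which only helps. The new path still runs from a minimum-degree vertex to a maximum-degree vertex. We compare its greedy record sequence with the old one:
\begin{itemize}
\item the prefix up to $v_i$ is unchanged, which gives $i+1$ records;
\item $w$ contributes at most one record;
\item after $w$, the suffix starting at $v_j$ is the old suffix, but the running maximum is at least $d(v_i)$, and possibly larger. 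Its records therefore form a subset of $\{v_j,\dots,v_r\}$, which gives at most $r-j+1$ records.
\end{itemize}
In total there are at most $i+1+1+(r-j+1)=r+3-(j-i)\le r+1$ records, with equality only when $j=i+2$. To get a contradiction also in the case $j=i+2$, I would invoke the tie-breaking rule: $P$ has minimum length among paths with $r+1$ records. The new path is strictly shorter unless the old segment between $v_i$ and $v_j$ had length exactly $2$. But that segment contains $v_{i+1}$ in its interior, and the new path skips $v_{i+1}$. So I would have to check the equality case carefully. If this is messy, I fall back to $j\ge i+3$, where the record count strictly drops to at most $r$, contradicting the minimality of $r+1$. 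The adjacency case $v_iv_j\in E$ is handled the same way, without the vertex $w$.

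\textbf{Step 2: counting.} Using the safe version ($j\ge i+3$), the closed neighbourhoods $N[v_i]$ for indices $i$ in a fixed residue class modulo $3$ are pairwise disjoint. Hence for each class $c\in\{0,1,2\}$,
$$\sum_{i\equiv c \pmod 3}\bigl(d(v_i)+1\bigr)\le n .$$
Summing over the three classes and using $d(v_i)\ge i+1$ gives
$$\sum_{i=0}^{r}(i+2)=\frac{(r+1)(r+4)}{2}\le 3n,$$
so $r^2<(r+1)(r+4)\le 6n$, that is, $r\le\sqrt{6n}$. This matches the constant in the claim, which suggests that the intended argument is exactly the mod-$3$ version.

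\textbf{Main obstacle.} The delicate point is the record-count comparison in Step 1. Inserting $w$ may create a new record, and it may also swallow later records if $d(w)$ is large. I need to argue cleanly that the suffix records can only shrink, i.e.\ that greedy records under a larger starting threshold form a subset of the original ones. I also need to handle $w$ or the shortcut meeting $P$ elsewhere. In that case I take the first and last intersection points and observe that deleting a portion of a path never increases the number of greedy records beyond the bound above.
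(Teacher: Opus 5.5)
\textbf{There is a gap in Step~1.} Your plan is the paper's own: the paper double counts $\sum_i d(v_i)$ using the fact that no vertex has more than three neighbours among $v_0,\dots,v_r$, justified by the same shortcut. But Step~1 is not merely delicate. It is false in the case you deferred, namely when the common neighbour $w$ is a non-record vertex of $P$ lying \emph{after} $v_j$. Your ``subpath'' is then $p_0\cdots v_i\,w$ followed by the part of $P$ after $w$. It resumes at a non-record vertex, with the running maximum lowered from at least $d(v_j)$ to $d(v_i)$, so vertices after $w$ that were previously dominated can all become new records. Your suffix bound is valid only when the new path resumes at an old record. The assertion that deleting a portion of a path never increases the record count is false.

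\textbf{A counterexample.} Take the path $v_0v_1v_2v_3v_4$, a vertex $w$ adjacent to $v_1$ and $v_4$, and a path $wx_1x_2x_3y$. Hang copies of $K_4$ (each joined by a single edge) to make the degrees of $v_0,v_1,v_2,v_3,v_4,w,x_1,x_2,x_3,y$ equal to $1,3,4,5,7,4,5,6,7,8$. The pendant blocks have degrees $3$ and $4$ and cannot be traversed. So $v_0$ and $y$ are the unique vertices of minimum and maximum degree, and the only $v_0$--$y$ paths are:
\begin{itemize}
\item $v_0v_1wx_1x_2x_3y$, with degrees $1,3,4,5,6,7,8$ and seven records;
\item $v_0v_1v_2v_3v_4wx_1x_2x_3y$, with degrees $1,3,4,5,7,4,5,6,7,8$ and six records.
\end{itemize}
Hence $P$ is the second path and $r=5$. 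Yet $w\in N[v_1]\cap N[v_4]$ although $4-1=3$, and your shortcut produces the first path, which is worse.

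\textbf{The paper has the same hole.} Its one-line justification does not handle this case either. Make $w$ adjacent to $v_1,\dots,v_4$, with degrees $1,3,4,5,9$ on $v_0,\dots,v_4$, degree $5$ on $w$, and degrees $6,7,8,9,10$ on $x_1,x_2,x_3,x_4,y$. Then the record-minimal path is $v_0\cdots v_4wx_1\cdots x_4y$, and $w$ has four record neighbours.

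\textbf{How to fix it.} The claim itself is true, and your Step~2 proves it once you apply it to a different path. Let $Q=u_0u_1\cdots u_L$ be a \emph{shortest} path from a vertex of minimum degree to one of maximum degree.
\begin{itemize}
\item On $Q$ every shortcut strictly shortens the path, and this includes the case where the common neighbour lies on $Q$. Indeed $Q$ is an induced path, so a vertex $u_c$ adjacent to both $u_a$ and $u_b$ forces $|a-b|\le 2$.
\item Hence $N[u_a]\cap N[u_b]=\emptyset$ whenever $|a-b|\ge 3$, with no record bookkeeping at all.
\item Apply your mod-$3$ count to the $r_Q+1$ records of $Q$, whose degrees are at least $1,2,\dots,r_Q+1$. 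This gives $(r_Q+1)(r_Q+4)\le 6n$.
\item Since $P$ minimises the number of records among all such paths, $r\le r_Q<\sqrt{6n}$.
\end{itemize}
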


    \begin{claimproof}
        Let the degrees of the particular minimum (strictly) increasing subsequence be $1  \le d_0 < d_1< \ldots< d_{r}$.
        Note that $\sum d_i \ge \sum_{i=0}^{r} (i+1) =\binom{r+2}{2}.$
        Let $v_0, v_1, \ldots, v_r$ be the corresponding vertices with these degrees (one representative) of the path.
        Note that no vertex has more than $3$ neighbours among $\{v_0, v_1, \ldots, v_r\},$ as otherwise one could take a path with shorter greedy increasing subsequence.
        This implies that $\frac{r^2}2<\binom{r+2}{2} \le 3n$, from which the conclusion follows.
    \end{claimproof}

    By definition, $\sigma(G) \ge \sum_{uv \in E(P)} (d(u)-d(v))^2 \ge \sum_{i=1}^r ((d_i - d_{i-1})^2 \ge r \left(\frac{ \Delta- \delta}{r}\right)^2 =\frac{ (\Delta-\delta)^2}{r}.$
    Here AM-GM is used.

    Since $\sigma_t(G) \le \binom{n}{2}(\Delta-\delta)^2$ by the trivial upper bound, we conclude that 
    $\frac{\sigma_t(G)}{\sigma(G)}\le r\binom{n}{2} \le \sqrt{6n}\frac{n^2}2=O(n^{5/2}).$
\end{proof}
Next, we prove that the bound $f(n)= \Theta(n^{5/2})$ is sharp.

\begin{proof}[Proof of $f(n)= \Omega(n^{5/2})$]
We make use of the following fact.
\begin{claim}
    Let $k\ge 8$ be an even number. For every $4\le r \le k-4$, there exists a (connected) graph $G_r$ of order $k$ such that all vertices of $G_r$ have degree $r$, except from one vertex $v_r$ which has degree $r-2.$
\end{claim}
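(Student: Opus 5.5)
The plan is to build $G_r$ by deleting a matching from a connected $r$-regular graph on $k$ vertices and then repairing all but one vertex. Since $k$ is even, an $r$-regular graph on $k$ vertices exists for every $0\le r\le k-1$. A convenient concrete choice is a circulant graph. If $r$ is even, take $C_k(1,2,\ldots,r/2)$. If $r$ is odd, take $C_k(1,\ldots,(r-1)/2,k/2)$. Both are connected because they contain the Hamiltonian cycle given by the generator $1$.

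The naive modification is to remove one edge $v_r u$ from such a graph $H$. This lowers the degrees of $v_r$ and $u$ by one each, but we need $v_r$ to lose $2$ while every other vertex keeps degree $r$. Degree-sum parity is fine: $(k-1)r+(r-2)=kr-2$ is even. So I would proceed as follows.

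\begin{itemize}
\item Pick two neighbours $a,b$ of $v_r$ in $H$ that are not adjacent to each other.
\item Delete the edges $v_ra$ and $v_rb$, and add the edge $ab$.
\end{itemize}

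After this, $v_r$ has degree $r-2$, and $a,b$ lose one edge each and regain one, so every other vertex still has degree $r$.

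The key step is to find such a non-adjacent pair $a,b\in N(v_r)$. In the circulant with $v_r=0$, the neighbours include $1$ and $-1$ in both parities. The vertices $1$ and $-1$ differ by $2$, so they are adjacent when $r\ge 4$ is even. Instead I would pick $a=r/2$ and $b=-r/2$ (for even $r$), whose difference $r$ is not in the connection set $\{\pm1,\ldots,\pm r/2\}$ modulo $k$ provided $r\le k-r/2-1$, which holds since $r\le k-4$. A similar choice works for odd $r$, taking $a=(r-1)/2$ and $b=-(r-1)/2$. Their difference $r-1$ must avoid $\{\pm1,\ldots,\pm(r-1)/2,k/2\}$; this holds since $r-1\ne k/2$ can be arranged, or else one can switch to another pair such as $(r-1)/2$ and $k/2$ and check the difference. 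This case check is the main, though routine, obstacle.

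Finally, I need connectivity after the modification. $H$ minus two edges at $v_r$ stays connected because $v_r$ still has $r-2\ge 2$ neighbours. More robustly, the circulant is $r$-edge-connected with $r\ge4$, so removing $2$ edges preserves connectivity, and adding $ab$ cannot hurt. The hypotheses $k\ge8$ and $4\le r\le k-4$ are exactly what guarantee $r-2\ge2$ and leave room for the non-adjacent pair.
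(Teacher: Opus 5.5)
\textbf{Gap in the key step.} Switching on a regular circulant is a valid route, and it differs from the paper's. The paper builds $G_r$ directly as an edge-disjoint union of a step-$2$ cycle on $k-1$ of the vertices (using only circulant distances $2$ and $3$) with distance-$i$ two-factors for $i\notin\{2,3\}$, and possibly the perfect matching. But the step you yourself call the key one fails as written.

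For even $r$, the vertices $r/2$ and $-r/2$ are at circular distance $\min(r,k-r)$. They are therefore non-adjacent iff $k-r>r/2$, i.e.\ $r<2k/3$. Your claim that $r\le k-r/2-1$ follows from $r\le k-4$ is false. For example, with $k=12$, $r=8$, in $C_{12}(1,2,3,4)$ the vertices $4$ and $8$ differ by $4$ and are adjacent.

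The odd case breaks in two ways:
\begin{itemize}
\item If $r-1=k/2$ (e.g.\ $k=12$, $r=7$, $H=C_{12}(1,2,3,6)$), then $\pm 3$ are joined by the diameter chord. Your fallback pair $(r-1)/2$, $k/2$ has difference $k/2-(r-1)/2=(r-1)/2\in S$, so it is \emph{always} adjacent exactly when you need it.
\item Your primary pair also fails whenever $k\le 3(r-1)/2$, even with $r-1\neq k/2$. For example, with $k=20$, $r=15$, the vertices $7$ and $13$ are at distance $6\le 7$.
\end{itemize}
So for infinitely many admissible $(k,r)$ your construction does not produce the required pair.

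\textbf{Repair.} No case analysis is needed. In a connected $r$-regular graph $H$ on $k\ge r+2$ vertices, $N(v)$ is never a clique. Otherwise $N[v]$ would induce a $K_{r+1}$ in which every vertex already has its full degree $r$, making it a whole component of $H$, contradicting connectivity since $k>r+1$. Hence a non-adjacent pair $a,b\in N(v_r)$ always exists.

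With such a generic pair, $a$ or $b$ may be $\pm 1$, so for connectivity you must rely on edge-connectivity. A connected vertex-transitive graph is $r$-edge-connected by Mader's theorem, so deleting two edges with $r\ge 3$ cannot disconnect it. Your first remark, that $v_r$ keeps $r-2\ge 2$ neighbours, is not by itself a reason for connectivity.

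With these two fixes your argument goes through for the whole range $4\le r\le k-4$. Compared with the paper, it replaces the explicit factorisation of $K_k$ with a generic switching step.
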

\begin{claimproof}
    Notice that $K_k$ can be decomposed in $\frac k2-1$ two-factors (spanning $2$-regular subgraphs) and a perfect matching, by ordering the vertices as $1\le v \le k$ and for every $1 \le i \le \frac k2$ connecting the vertices which differ by $i$ in the ordering (where the difference is taken modulo $k$).
    One can also take the cycle on $1 \le v \le k-1$ and connect the ones with difference $2$ modulo $k-1$.
    The union of this latter cycle with some of the previous two-factors for $i \not \in \{2,3\}$ and possibly the perfect matching, gives the result.
\end{claimproof}

For every integer $s\ge 0$, one can also consider a graph $G_3$ and $G_{k-3}$ which are of order $k+1+2s$ whose vertices all have degree $3$ respectively $k-3$ but one vertex (called $v_3$ respectively $v_{k-3}$) which has degree one less. The proof of existence is analogous to the previous claim.

Now take $\cup_{i=3}^{k-3} G_i $ and for every $3 \le i \le k-4$, connect the vertices $v_i$ and $v_{i+1}.$
Thus all initial vertices from $G_i$ have now degree $i$.
The resulting graph $G$ has order $k(k-5)+2+4s.$
All edges in the resulting graph $G$ have end vertices with the same degree, except for the edges $v_iv_{i+1}.$
This implies that $\sigma(G)=k-6.$

On the other hand $\sigma_t(G)$ is of the order $n^2k^2 \sim k^6$ (for $s$ being bounded by $k$, and thus $n \sim k^2$), resulting in $\frac{\sigma_t(G)}{\sigma(G)}$ being of the order $\frac{k^6}{k} \sim n^{5/2}.$
For this, it is sufficient to note that there are $\binom n2 = \Theta(n^2)$ terms, and $(d(u)-d(v))^2$ is typically of the order $k^2$.
For odd order, one can modify $G_4$ with one additional vertex, and conclude that the construction works for all large $n.$
\end{proof}

\section*{Acknowledgement}

This research was conducted during S.C.'s visit to Nankai University, hosted by Yongtang Shi and Jiangdong Ai. S.C. gratefully acknowledges their hospitality. We are grateful to the referees for suggestions improving the presentation of this work.


\begin{thebibliography}{1}

\bibitem{Albertson97} M. O. Albertson, The irregularity of a graph, \emph{Ars Comb.} 46 (1997) 219--225.

\bibitem{ADG18} H. Abdo, D. Dimitrov, I. Gutman, Graphs with maximal $\sigma$ irregularity, \emph{Discrete Appl. Math.} 250 (2018) 57--64.

\bibitem{ADG19} H. Abdo, D. Dimitrov, I. Gutman, Graph irregularity and its measures, \emph{Appl. Math. Comput.} 357 (2019) 317--324.

\bibitem{Bell92}
F. K. Bell,  A note on the irregularity of graphs, Linear Algebra Appl. 161 (1992), 45--54.
 

\bibitem{DGLC23}
D.~Dimitrov, W.~Gao, W.~Lin, and J.~Chen.
\newblock Extremal trees with fixed degree sequence for {{\(\sigma\)}}-irregularity.
\newblock {\em DML, Discrete Math. Lett.}, 12:166--172, 2023.

\bibitem{DS23} D. Dimitrov, D. Stevanović, On the $\sigma_{t}$-irregularity and the inverse irregularity problem,
\emph{Appl. Math. Comput.}  441 (2023) 127709.

\bibitem{FDKS24}
S.~Filipovski, D.~Dimitrov, M.~Knor, and R.~{\v{S}}krekovski.
\newblock Some results on $\sigma_{t}$-irregularity.
\newblock Preprint, {arXiv}:2411.04881 [math.{CO}] (2024), 2024. Accepted at Ars Math. Contemp., doi.org/10.26493/1855-3974.3268.60f

\bibitem{FGKZP15}
B.~Furtula, I.~Gutman, {\v{Z}}.~Kovijani{\'c}~Vuki{\'c}evi{\'c}, G.~Lekishvili, and G.~Popivoda.
\newblock On an old/new degree-based topological index.
\newblock {\em Bull., Cl. Sci. Math. Nat., Sci. Math.}, 40:19--31, 2015.

\bibitem{GYCC18} I. Gutman, M. Togan, A. Yurttas, A. S. Cevik, I. N. Cangul, Inverse problem for sigma index, \emph{MATCH Commun. Math. Comput. Chem.} 79 (2018) 491--508.

\bibitem{Reti19} T. R\' {e}ti, On some properties of graph irregularity indices with a particular regard to the $\sigma$-index, \emph{Appl. Math. Comput.} 344--345 (2019) 107--115.

\bibitem{snijders81} T. A. B. Snijders, The Degree Variance, An Index of Graph Heterogeneity, \emph{Social Networks} 3 (1981) 163--174.

\end{thebibliography}
\end{document}